\newtheorem{Exa}{Example}
\newcommand{\tnc}[1]{}
\newcommand{\tnm}[1]{\marginpar{\color{red}\tiny\textsf{#1}}}
\renewcommand{\tnm}[1]{}
\renewcommand{\ge}{\geqslant}
\begin{document}
\title{\mbox{A Cellular Automaton for Blocking Queen Games}}

\author{Matthew Cook\inst{1} and Urban Larsson\inst{2} and Turlough Neary\inst{1}}
\institute{Institute of Neuroinformatics, University of Z\"urich and ETH Z\"urich, Switzerland\\
 \and Department of Mathematics \& Statistics, Dalhousie University, Halifax, Canada\footnote{Supported by the Killam Trust.  Contact email: \email{urban031@gmail.com}}
}

\date{\today }
\maketitle

\begin{abstract}
We show that the winning positions of a certain type of two-player game
form interesting patterns which often defy analysis,
yet can be computed by a cellular automaton.
The game, known as {\em Blocking Wythoff Nim},
consists of moving a queen as in chess, but always towards~(0,0),
and it may not be moved to any of $k-1$ temporarily ``blocked'' positions
specified on the previous turn by the other player.
The game ends when a player wins by blocking all possible moves of the other player.
The value of~$k$ is a parameter that defines the game,
and the pattern of winning positions can be very sensitive to~$k$.
As~$k$ becomes large,
parts of the pattern of winning positions
converge to recurring chaotic patterns that are independent of~$k$.
The patterns for large~$k$ display an unprecedented amount of self-organization
at many scales,
and here we attempt to describe the self-organized structure that appears.
\end{abstract}



\section{Blocking Queen Games ($k$-Blocking Wythoff Nim)}\label{sec:gamedef}

In the paper~\cite{Lar11},
the game of $k$-Blocking Wythoff Nim was introduced,
with rules as follows.

\begin{itemize}

\item[Formulation 1:]
As in Wythoff Nim~\cite{Wyt07},
two players alternate in removing counters from two heaps:
any number may be removed from just one of the heaps,
or the same number may be removed from both heaps.
However, a player is allowed to reject the opponent's move
(so the opponent must go back and choose a different, non-rejected move),
up to $k-1$ times, where $k$ is a parameter that is fixed for the game.
The $k^{\rm th}$ distinct attempted move must be allowed.
Thus, if there are at least $k$ winning moves
among the options from a given position,
then one of these winning moves can be played.

\item[Formulation 2:]
There are $k$ chess pieces on an infinite (single quadrant) chess board:
one queen, and $k-1$ pawns.
On your turn you move the queen towards the origin.
(The first player who cannot do this loses.)
The queen cannot be moved to a position with a pawn,
but it can move over pawns to an empty position.
After moving the queen,
you complete your turn by moving the $k-1$ pawns to wherever you like.
The pawns serve to block up to $k-1$ of the queen's possible next moves.

\item[Example Game:]
Consider a game with $k=5$,
where the queen is now at $(3,3)$ (yellow in Figure~\ref{fig:game5}).
It is player $A$'s turn,
and player $B$ is blocking the four positions $\{(0,0), (1,1), (0,3), (3,0)\}$
(dark brown and light olive).
This leaves $A$ with the options
$\{(3,1),(3,2),(2,2),(2,3),(1,3)\}$
(each is black or blue).
Regardless of which of these $A$ chooses,
$B$ will then have at least five winning moves to choose from
(ones marked yellow, or light, medium, or dark olive).
These are winning moves because it is possible when moving there
to block all possible moves of the other player and thereby immediately win.
Therefore player $B$ will win.
\end{itemize}

\begin{figure}[h]
\vspace{-4mm}
\centering{
\begin{tikzpicture}
\node[inner sep = 0pt, anchor = north west] (grid) at (0,0)
    {\includegraphics[width = 0.4\textwidth]{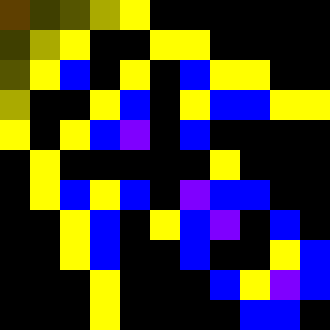}};
\newcommand*{\mygap}{0.447}
\newcommand*{\mytop}{0.05}
\newcommand*{\myleft}{0.05}
\foreach \i in {0,...,10}
{
    \pgfmathsetmacro{\x}{(\i + 0.5)} 
    \draw (\x * \mygap, \mytop) node[anchor=south] {$\i$};
    \pgfmathsetmacro{\y}{(\i + 0.5)}
    \draw (-1 * \myleft, \y * \mygap * -1) node[anchor=east] {$\i$};
}
\end{tikzpicture}
}
\caption{Game 5: 
$(0,0)$~is the upper left position on this $11\times 11$ chessboard,
and the queen is allowed to move north, west, or north-west.
The colors represent the number of winning moves available (ignoring blocking)
from each position.
0:~Dark Brown, 1:~Dark Olive, 2:~Olive, 3:~Light Olive, 4:~Yellow,
5:~Black, 6:~Blue, 7:~Indigo.
Since $k=5$, up to four moves can be blocked.
This means that by moving to a position with four or fewer winning moves available,
it is possible to block all those winning moves,
thus preventing the other player from making a winning move.
Therefore the positions with four or fewer winning moves available
(yellow, brown, and the olives)
are themselves winning moves.
The remaining positions, with five or more winning moves available
(black, blue, and indigo),
are losing moves,
because if you move there,
it is not possible to prevent the other player from making a winning move.
Thus the color at any given position
(the {\em palace number}, see text)
can be computed
by considering the colors above and to the left of it.
}
\label{fig:game5}
\end{figure}

As shown in Figure~\ref{fig:game5},
there is a simple algorithm to compute the winning positions for game~$k$.
These are known as {\em P-positions} in combinatorial game theory,
and we will refer to them as {\em palace positions},
the idea being that the queen wants to move to a palace:
if you move her to a palace, you can win,
while if you move her to a non-palace, your opponent can win.
To win,
you must always block (with pawns) all of the palaces your opponent might move to.

The idea is simply that
a palace is built on any site
that can see fewer than $k$ other palaces
when looking due north, west, or north-west.
In this way, the pattern of palaces can be constructed,
starting at (0,0) and going outward.
For efficiency, a dynamic programming approach can be used,
storing three numbers at each position,
for the number of palaces visible in each of the three directions.
With this technique,
each line can be computed just from the information in the previous line,
allowing significant savings in memory usage.

The case $k=1$ corresponds to classical Wythoff Nim,
solved in~\cite{Wyt07}.
In~\cite{Lar11}, the game was solved for $k = 2$ and $3$.
When we say a game is solved we mean it is possible to give
closed-form expressions for the P-positions,
or at least that a winning move, if it exists,
can be found in log-polynomial time in the heap sizes.
For example, the set
$\{(\lfloor n\phi \rfloor, \lfloor n\phi^2 \rfloor ),
(\lfloor n\phi^2 \rfloor, \lfloor n\phi \rfloor )\}$,
where $n$ runs over the nonnegative integers and
$\phi=\frac{1+\sqrt{5}}{2}$ is the golden ratio,
provides a solution for classical Wythoff Nim. Combinatorial games with a blocking maneuver appears in \cite{GaSt04}, \cite{HoRe01}, \cite{HoRe} and \cite{SmSt02}, and specifically for Wythoff Nim in \cite{Gur10}, \cite{HeLa06}, \cite{Lar09} and \cite{Lar15}.

The new idea that we use in this paper is to look directly at
the number of palaces that
the queen can see from each position on the game board,
and to focus on this {\em palace number}
rather than just on the palace positions (P-positions)\@.
(The palace positions are exactly the locations with palace numbers less than $k$.)
In previous explorations, the palace number was in fact computed
but then only used for finding the new palace positions,
which when viewed on their own
give the appearance of involving long-range information transfer.
By observing the palace number directly, however,
we can see that in almost all positions
the palace number is surprisingly close to $k$,
and if we look at its deviation from $k$
then we are led to discover that these deviations follow
a local rule that does not even depend on $k$.
The next section will present this local rule as a cellular automaton (CA). A finite automaton for Wythoff Nim has been studied in a different context in \cite{Lan02}. Other recent results for cellular automata and combinatorial games can be found in \cite{Fin12}, \cite{Lar13} and  \cite{LaWa13}.

The rest of the paper will present the rich structure visible
in the patterns of palace numbers for games with large $k$.
A surprising number regions of self organization appear,
largely independent of the particular value of $k$,
and some of the self-organized patterns are quite complex,
involving multiple layers of self-organization.
This is the first CA we are aware of that exhibits so many levels of self-organization.
So far, these patterns offers many more questions than answers,
so for now we will simply try to catalog our initial observations.


\section{A Cellular Automaton Perspective}

In this section we describe a cellular automaton 
that computes the palace numbers for blocking queen games.

\subsection{Definition of the CA}

The CA we present is one-dimensional
and operates on a diamond space-time grid
as shown in Figure~\ref{fig:carule},
so that at each time step,
each cell that is present at that time step derives from two parents
at the previous time step,
one being half a unit to the right,
the other being half a unit to the left.
On the diamond grid, there is no cell from the previous step
that is `the same' as a given cell on the current step.
However, the three grandparents of a cell,
from two steps ago,
do include a cell which is
at the same spatial position as the grandchild cell.

Our CA rule is based not only on the states of the two parent cells,
but also on the state of the central grandparent cell,
as well as {\em its} parents and central grandparent,
all shown in blue in Figure~\ref{fig:carule}.
As such,
this CA depends on the previous four time steps,
{\it i.e.}~it is a fourth-order CA.
It is the first naturally occurring fourth-order CA that we are aware of.
We say it is ``naturally occurring'' simply because we discovered
these pictures by analyzing the blocking queen game,
and only later realized that these pictures
can also be computed by the fourth-order diamond-grid CA
we present here.%
\footnote{
Using the dynamic programming approach described in Section~\ref{sec:gamedef},
the information for each cell can be computed
from just its parents and central grandparent,
so that approach is also clearly a CA,
but it needs a large number of states for the cells
and each different value of $k$ requires a different rule.
The CA we describe in this section is much more surprising,
being a single rule that is completely independent of $k$
and using very few states to produce all of the interesting parts
of the pictures.
}

The states in our CA are integers.
In general they are close to 0,
but the exact bounds depend on the initial conditions.
The formula for computing a cell's value,
given its neighborhood,
is described in Figure~\ref{fig:carule}.

\begin{figure} [h]
\begin{center}
\hspace{-20pt}%
\begin{tikzpicture} [scale = 0.5]

\foreach \x/\y in {-11/0, -12/0, -13/0, -14/0}
 \draw[thin, gray] (\x, \y) -- (\x+5, \y+5);
\foreach \x/\y in {-9/0, -8/0, -7/0, -6/0}
 \draw[thin, gray] (\x-1, \y) -- (-6, \y-5-\x);
\foreach \x/\y in {-14/1, -14/2, -14/3, -14/4}
 \draw[thin, gray] (\x, \y) -- (\x-\y+5, 5);
\foreach \x/\y in {-11/0, -12/0, -13/0, -14/0}
 \draw[thin, gray] (\x, \y+5) -- (\x+5, \y);
\foreach \x/\y in {-9/0, -8/0, -7/0, -6/0}
 \draw[thin, gray] (\x-1, \y+5) -- (-6, \y+10+\x);
\foreach \x/\y in {-14/1, -14/2, -14/3, -14/4}
 \draw[thin, gray] (\x, \y) -- (\x+\y, 0);
\draw[thick, blue] (1 + -12, 4) -- (1 + -11, 3);
\draw[thick, blue] (1 + -12.5, 3.5) -- (1 + -11, 2);
\draw[thick, blue] (1 + -13, 3) -- (1 + -12, 2);
\draw[thick, red] (1 + -12, 2) -- (1 + -11.5, 1.5);
\draw[thick, blue] (1 + -13, 2) -- (1 + -12.5, 1.5);
\draw[thick, red] (1 + -12.5, 1.5) -- (1 + -12, 1);
\draw[thick, blue] (-23 - -12, 4) -- (-23 - -11, 3);
\draw[thick, blue] (-23 - -12.5, 3.5) -- (-23 - -11, 2);
\draw[thick, blue] (-23 - -13, 3) -- (-23 - -12, 2);
\draw[thick, red] (-23 - -12, 2) -- (-23 - -11.5, 1.5);
\draw[thick, blue] (-23 - -13, 2) -- (-23 - -12.5, 1.5);
\draw[thick, red] (-23 - -12.5, 1.5) -- (-23 - -12, 1);
\draw (-10,5.5) node {$\leftarrow \mbox{space} \rightarrow$};
\draw (-14.5,2.5) node[rotate=90] {$\leftarrow \mbox{time}$};

\foreach \x/\y in {1/0, 0/1, 1/2, 2/1, 0/2}
 \draw[thick, blue] (\x, \y) rectangle (\x+1, \y+1);

\foreach \x/\y in {2/0}
 \draw[thick, red] (\x, \y) rectangle (\x+1, \y+1);

\draw (0.5 , 2.5) node {$a$};
\draw (1.5 , 2.5) node {$b$};
\draw (0.5 , 1.5) node {$c$};
\draw (1.5 , 1.5) node {$d$};
\draw (2.5 , 1.5) node {$e$};
\draw (1.5 , 0.5) node {$f$};
\draw (2.5 , 0.5) node {$g$};

\draw[->] (-2,.5) -- (-0.5,.5); \draw (-2.5 , 2.5) node {$h_1$};
\draw[->] (-2,1.5) -- (-0.5,1.5); \draw (-2.5 , 1.5) node {$h_2$};
\draw[->] (-2,2.5) -- (-0.5,2.5); \draw (-2.5 , 0.5) node {$h_3$};

\draw[->] (.5,5) -- (0.5, 3.5); \draw (.5 , 5.5) node {$v_1$};
\draw[->] (1.5,5) -- (1.5, 3.5); \draw (1.5 , 5.5) node {$v_2$};
\draw[->] (2.5,5) -- (2.5, 3.5); \draw (2.5 , 5.5) node {$v_3$};

\draw[->] (-1,5) -- (0, 4); \draw (-1.5 , 5.5) node {$d_1$};
\draw[->] (-1.5,4.5) -- (-.5, 3.5); \draw (-2 , 5) node {$d_2$};
\draw[->] (-2,4) -- (-1, 3); \draw (-2.5 , 4.5) node {$d_3$};

\foreach \x/\y in {6/0, 5/1, 6/2, 7/1, 5/2}
 \draw[thick, green]  (\x, \y) rectangle (\x+1, \y+1);

\draw (5.5 , 2.5) node {$3$};
\draw (6.5 , 2.5) node {$-2$};
\draw (5.5 , 1.5) node {$-2$};
\draw (6.5 , 1.5) node {$-1$};
\draw (7.5 , 1.5) node {$1$};
\draw (6.5 , 0.5) node {$1$};

\end{tikzpicture}
\end{center}

\caption{\label{fig:carule}
The CA rule.
{\bf (left)}
The diamond grid on which the CA operates.
The value of the red cell is determined by the values of the blue cells.
This neighborhood stretches four steps back in time
(i.e., four levels above the red cell),
making it a $4^{\rm th}$ order CA\@.
{\bf (center)}
This diagram is rotated $45^\circ$,
so time flows in the direction of the diagonal arrows.
This is the orientation used in all the figures in this paper.
The red cell's value is computed
according to the formula $g = a - b - c + e + f + p$
{\bf (right)}
These green squares correspond to the blue cells
and show the {\em palace compensation terms}.
For any blue cell containing a negative value
(and therefore a palace, see Section~\ref{sec:relation}),
the corresponding palace compensation term must be added.
In the formula, $p$ represents
the total contribution of these palace compensation terms.
Note that location $d$ only affects $g$ via its
palace compensation term, so only its sign matters.
}
\end{figure}
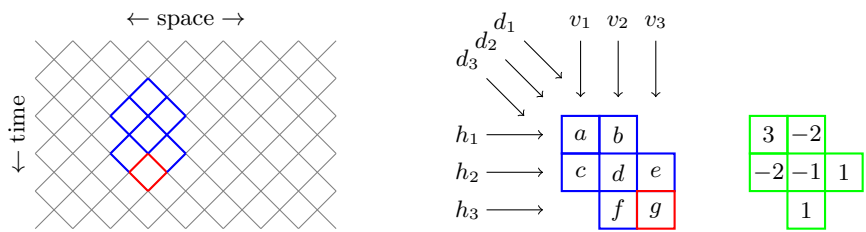



\subsection{The Connection between the CA and the Game}\label{sec:relation}

Since our definition of the CA appears to be completely different
from the definition of the blocking queen game,
we need to explain the correspondence.

The idea is that the states of this CA correspond to palace numbers minus~$k$, which are generally integers close to zero.
One can easily prove a bound of $3k+1$
for the number of states needed
for the game with blocking number $k$,
since each row, column, and diagonal
can have at most $k$ palaces,
so palace numbers are always in the range $[0,3k]$.
However,
in practice the number of states needed
(after the initial ramping-up region near the origin)
appears to be far smaller, more like $\log{k}$.
For example, when $k=500$,
only eight states are needed, ranging between -4 and 3.

Surprisingly,
this single CA is capable of computing the pattern of palace numbers
regardless of the value of $k$.
Different values of $k$ simply require different initial conditions:
The initial condition for a given value of $k$
is that every site in the quadrant opposite the game quadrant
should be $k$,
and every site in the other two quadrants should be 0.

\begin{theorem}\label{thm:ca}
The $k$-Blocking Wythoff Nim position $(x,y)$ is a P-position
if and only if the CA given in Figure~\ref{fig:carule}
gives a negative value at that position,
when the CA is started from an initial condition
defined by
$$CA(x,y)=\left\{\begin{array}{ccl}
k & ~~~ & x < 0 \mbox{~and~} y < 0 \\
0 & ~~~ & x < 0 \mbox{~and~} y\ge0 \\
0 & ~~~ & x\ge0 \mbox{~and~} y < 0 \\
\end{array}\right.$$
\end{theorem}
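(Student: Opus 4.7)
The plan is to prove the stronger statement that the CA value at every cell $(x,y)$ with $x,y\ge 0$ equals $C^*(x,y):=P(x,y)-k$, where $P(x,y)$ denotes the palace number of position $(x,y)$. Since $(x,y)$ is a P-position precisely when $P(x,y)<k$, this is equivalent to $C^*(x,y)<0$, which is the negativity condition in the theorem.

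The core of the argument is a combinatorial identity that reproduces the CA rule of Figure~\ref{fig:carule}. I would decompose the palace number as $P(x,y)=W(x,y)+N(x,y)+D(x,y)$, where $W$, $N$, $D$ count palaces strictly west, strictly north, and strictly northwest of $(x,y)$ along the queen's three lines of sight. Each piece satisfies an elementary one-step recurrence, for example $W(x,y)=W(x-1,y)+\chi(x-1,y)$ (with $\chi(v)=1$ if $v$ is a palace and $0$ otherwise), and analogously for $N$ down columns and $D$ along diagonals. Expanding these recurrences back to the six cells $a=(x-2,y-2)$, $b=(x-1,y-2)$, $c=(x-2,y-1)$, $d=(x-1,y-1)$, $e=(x,y-1)$, $f=(x-1,y)$ and collecting terms yields
\[
P(g)=P(a)-P(b)-P(c)+P(e)+P(f)+3\chi(a)-2\chi(b)-2\chi(c)-\chi(d)+\chi(e)+\chi(f),
\]
in which the palace-indicator coefficients are exactly the palace-compensation weights $3,-2,-2,-1,1,1$ displayed in green in Figure~\ref{fig:carule}. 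Since the $P$-coefficients on the right sum to $1$, substituting $P=C^*+k$ throughout lets the $k$'s cancel and produces exactly the CA rule $C^*(g)=C^*(a)-C^*(b)-C^*(c)+C^*(e)+C^*(f)+p$, with palacehood of a cell $v$ equivalent to $C^*(v)<0$.

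With the identity in place, I would induct on the antidiagonal index $x+y$ for cells $(x,y)$ in the game quadrant. Antecedent cells lying in the game quadrant are covered by the inductive hypothesis, and those lying outside are supplied by the stated initial values. The purpose of the specific initial data is precisely to force the identity to evaluate correctly at boundary positions: for instance, at $g=(0,0)$ all six antecedents lie outside the game quadrant, their $P$-contributions are $2k-2k-2k+k+k=0$, no antecedent is a palace (since every initial $C^*$-value is non-negative), and so the identity delivers $P(0,0)=0$, whence $C^*(0,0)=-k$, as required. The analogous check settles $(1,0)$, $(0,1)$, and then the induction propagates into the interior.

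The main obstacle is the bookkeeping in the derivation of the identity: one must track how the three step recurrences for $W$, $N$, $D$ propagate through the six-cell neighborhood and confirm that the residual palace indicators aggregate to precisely the weights $3,-2,-2,-1,1,1$. The coefficient $-1$ at $d=(x-1,y-1)$, for example, arises as the sum of a $+\chi(d)$ term contributed by $D$ (since $d$ lies on the diagonal of $g$) and two $-\chi(d)$ terms contributed by $W$ and $N$ (since $d$ also lies in the row and column chains leading to $g$). A secondary, entirely finite task is the boundary check at the first few cells of the game quadrant. Once both are in hand, the induction closes and the CA state equals $P(x,y)-k$ throughout the game quadrant, yielding the claimed characterization of P-positions.
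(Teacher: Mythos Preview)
Your proposal is correct and follows essentially the same approach as the paper: decompose the palace number into its three directional summands, derive the identity $P(g)=P(a)-P(b)-P(c)+P(e)+P(f)+3\chi(a)-2\chi(b)-2\chi(c)-\chi(d)+\chi(e)+\chi(f)$, observe that the linear $P$-coefficients sum to~$1$ so the shift by~$k$ drops out, and then check the boundary. The only stylistic difference is that the paper obtains the linear part of the identity in one stroke from the symmetric observation $a+e+f=\sum_i v_i+\sum_i d_i+\sum_i h_i=b+c+g$ (with directional counts taken strictly outside the window) and then adds the compensation terms cell by cell, whereas you expand the one-step recurrences for $W,N,D$ directly; both routes produce the same weights $3,-2,-2,-1,1,1$ and the same conclusion.
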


\begin{proof}
First we will consider the case of
no P-positions occurring within the CA neighborhood,
so compensation terms can be ignored.

As shown in Figure~\ref{fig:carule},
we will let $v_1$ be the number of P-positions directly above $a$ and $c$,
and similarly for $v_2$ and $v_3$,
as well as for the diagonals $d_i$ and horizontal rows $h_i$.

This gives us $a=v_1+d_2+h_1-k$, and so on:
when adding the $k$-value to each cell this represents the sum of the numbers of P-positions in the three directions.

We would like to express $g$ in terms of the other values.
Notice that
$$a+e+f \; = \;
\sum_{i=1}^3 v_i \; + \; \sum_{i=1}^3 d_i \; + \; \sum_{i=1}^3 h_i
\; - 3k = \; b+c+g$$
and therefore $a+e+f=b+c+g$,
allowing us to express $g$ in terms of the other values
as $g=a-b-c+e+f$.

All that remains is to take any P-positions in the CA neighborhood into account,
so as to understand the compensation terms.

If there is a P-position at $a$,
then $b$, $c$, $d$, and $g$
(i.e.\ the positions in a line to the right, down, or right-down)
will all be one higher than they were before taking that palace into account.
Since the equation $a+e+f=b+c+g$ was true when ignoring the palace at $a$,
it becomes wrong when the palace at $a$
produces its effect of incrementing $b$, $c$, $d$, and $g$, because that makes the right hand side go up by 3
while the left hand side is untouched.
To compensate for this, we can add a term $p_a$ to the left hand side,
which is 3 if there is a palace at $a$, and 0 otherwise.

Similarly, if $b$ is a P-position,
then this increments $d$, $e$, and $f$,
so to compensate, we will need to subtract 2
from the left hand side of $a+e+f=b+c+g$ if $b$ is a P-position.

We can see that we are computing exactly the compensation terms 
shown in the green squares of Figure~\ref{fig:carule}.
Once we include all the compensation terms,
the formula for $g$ becomes correct even in the presence of local P-positions,
and it corresponds exactly to the rule given in Figure~\ref{fig:carule}.

The initial condition can be confirmed to produce (via the CA rule) 
the correct values in the first two rows and columns of the game quadrant, 
and from that point onwards the reasoning given above shows that the 
correct palace numbers, and therefore the correct P-positions,
are being computed by the CA rule.
\end{proof}

\subsection{Notes on Reversability}

The reversed version of this CA
computes $a$, given $b$, $c$, $d$, $e$, $f$, and $g$.
This is done with the equation $a=g-f-e+c+b-p$,
which is equivalent to the equation
in the caption of Figure~\ref{fig:carule}.
However,
the palace compensation term $p$
can depend on $a$,
so this equation has not
fully isolated $a$ on the left hand side.
(The forward direction did not have this problem,
since $p$ does not depend on $g$.)
Writing $p=p_a+p_{bcdef}$ to separate
the palace compensation term from $a$
from the other palace compensation terms,
we get $a+p_a = g-f-e+c+b-p_{bcdef}$.
Since $p_a$ is 3 when $a$ is negative, and 0 otherwise,
this equation always yields either one or two solutions for $a$.
If the right hand side is 3 or more,
then $a$ must be equal to it.
If the right hand side is negative,
then $a$ must be 3 less than it.
And if the right hand side is 0, 1, or 2,
then $a$ can {\em either} be equal to it
{\em or} be 3 less than it---we are free to choose.
The reversed rule is non-deterministic,
but it can always find a compatible value.
In other words, there is no ``Garden of Eden'' pattern for this rule,
if we assume that all integers are permissible states.


\section{Self-Organization}\label{sec:selforg}

\begin{figure}[ht!]
\centering{
\includegraphics[width=0.45\textwidth]{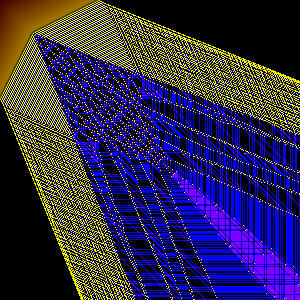}
~~~~~
\includegraphics[width=0.45\textwidth]{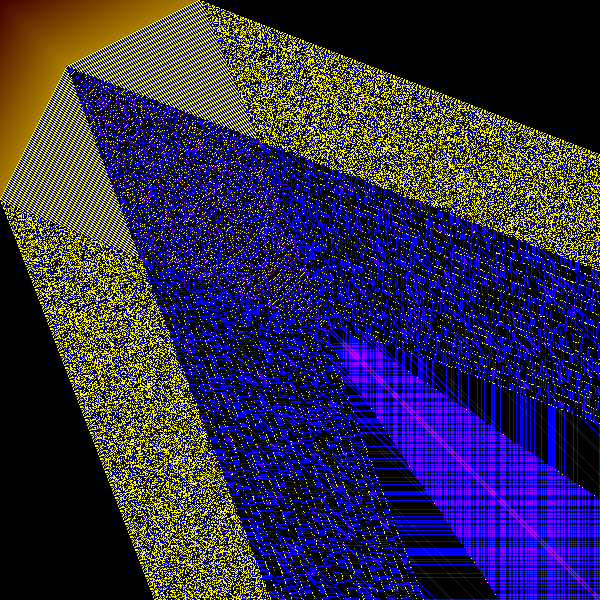}

\vspace{.15 cm}

\includegraphics[width=0.45\textwidth]{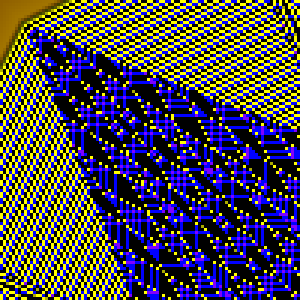}
~~~~~
\includegraphics[width=0.45\textwidth]{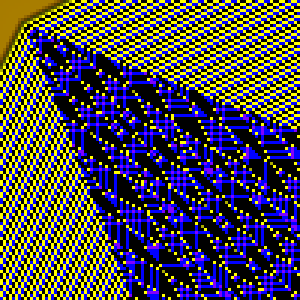}

\vspace{.15 cm}

\includegraphics[width=0.45\textwidth]{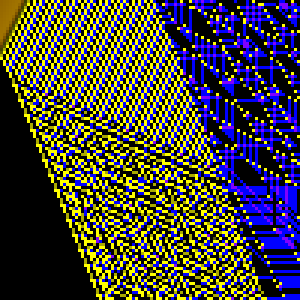}
~~~~~
\includegraphics[width=0.45\textwidth]{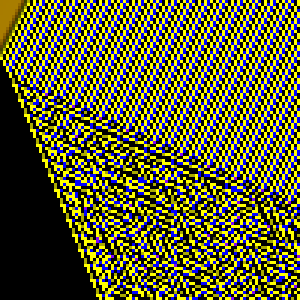}

}
\caption{\label{fig:selforg}\label{fig:prefix}
Self-organized regions in game 100 (left, $300 \times 300$ region shown in top row)
and game 1000 (right, $3000 \times 3000$ region shown in top row).
The lower images are close-ups of the upper images, showing regions of identical behavior.
The middle row shows a $100 \times 100$ region by the nose,
and the bottom row shows a $100 \times 100$ region by the shoulder.
\vspace{-20pt} 
}
\end{figure}

The top row of Figure~\ref{fig:selforg}
shows the palace number patterns for games 100 and 1000.
The patterns are strikingly similar,
given that the value of $k$ differs by an order of magnitude.
The pattern for game 1000 has the appearance of being
``the same, but ten times bigger'' than the pattern for game 100.
The middle and lower rows of Figure~\ref{fig:selforg}
zoom in on subregions where the two patterns are in fact identical,
without any scaling factor.

\subsection{Terminology}

As an aid to our discussion of these complex images,
we will give names to the prominent features in them.

We can see that this system self-organizes itself into
11 regions with 14 borders and 6 junctions.
Ignoring duplicates due to the mirror symmetry,
there are 7 regions, 7 borders, and 4 junction points.

We will start by naming the 7 regions and some features inside them.
The region at the upper left
(the game's terminal region, and the CA's starting region),
in the shape of a dented triangle, is the {\em hood}.
The triangular regions adjacent to the hood,
with a periodic interior
(visible in all panels of Figure~\ref{fig:selforg}),
are the {\em \'epaulettes}.
The rhomboid region that emanates from between the pair of \'epaulettes
is the {\em fabric}.
The solid black regions at the top and at the left
constitute the {\em outer space}.
Between the outer space and the \'epaulettes we find the {\em arms}
(irregular yellow regions in Figure~\ref{fig:selforg}),
which extend indefinitely.
Extending next to the arms, and of similar width, we have the {\em warps}.
Each warp contains a number of {\em threads}
(strings of yellow dots, clearly visible in the top left panel of
Figure~\ref{fig:prefix})
which come out of the fabric.
Between the warps lies the {\em inner sector},
and the blue stripes in the warps and in the inner sector are the {\em weft}.

Next, we will name the 4 junction points.
The hood, \'epaulettes, and fabric all meet at the {\em nose}.
The hood, \'epaulette, arm, and outer space all meet at the {\em shoulder}.
The warp, fabric, \'epaulette, and arm all meet at the {\em armpit},
which is often a hotspot of highly positive palace numbers.
And the fabric, warps, and inner sector meet at the {\em prism}.
The inner sector often contains slightly higher palace numbers than the warps,
especially near the main diagonal,
giving the impression of light being emitted from the prism,
as in Figure~\ref{fig:laser}.

Finally, we come to the 7 borders.
The hood and \'epaulette meet cleanly at the {\em casing}.
The hood contains all the states from $-k$ to $0$,
but after the casing, the CA uses very few states.
The \'epaulette and arm meet at the {\em hem}.
The \'epaulette and fabric meet at the {\em rift},
a narrow, relatively empty space which appears to get wider very slowly.
The fabric and warp meet at the {\em fray},
where threads almost parallel to the fray unravel from the fabric,
and threads in the other direction exit the fabric
and start merging to form thick threads in the warp.
There is no clear boundary where the warp meets the inner sector,
since the warp simply runs out of threads.
The warp also meets the arm cleanly, at the inside of the arm.
At the boundary between the warp and the arm,
it appears that the yellow nature of the arm is due to being packed full of threads,
and the warp simply has a much lower density of threads.
Threads that bend into the inner sector, and stop being parallel to the rest of the warp,
are sometimes called {\em beams} (see also \cite{Lar12}).
The often-occurring slightly-separated periodic part of the arm,
bordering the outer space, is the {\em skin}.

The fray, warp, central sector, armpits, and prism are all very sensitive to $k$,
but all the other regions are not,
with the exception of the fabric and the rift, which are sensitive only to $k \bmod 3$.
The fabric, fray, warp, prism, and central sector are all full of weft.
Often the centermost beams will communicate with each other via the weft,
and this process can usually be analyzed to calculate the slopes of these beams,
which are generally quadratic irrationals.

This is the greatest complexity of self-organization that we have seen for a system that
has no structured input.

\subsection{Structure Within the Regions}

\begin{figure}[ht!]
\centering{
\hspace{-10pt}\includegraphics[width=0.48\textwidth]{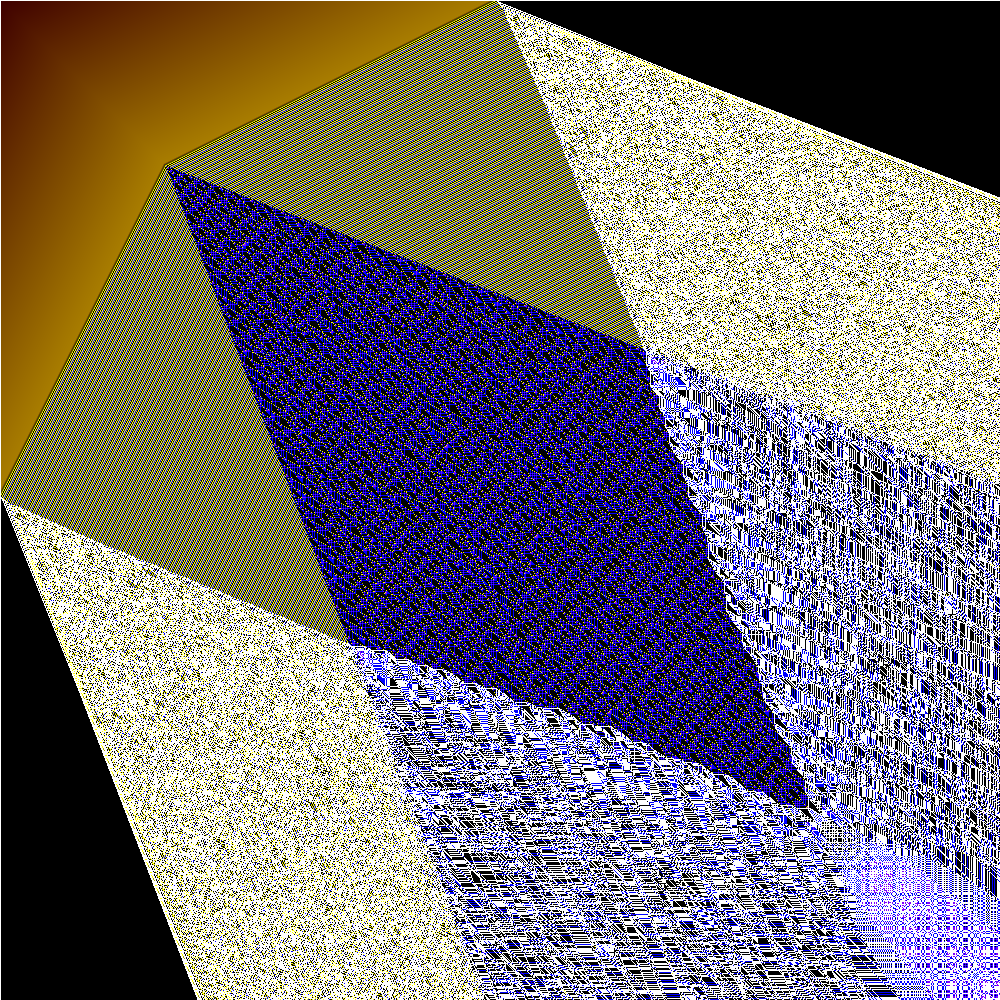}
~~~~~
\includegraphics[width=0.48\textwidth]{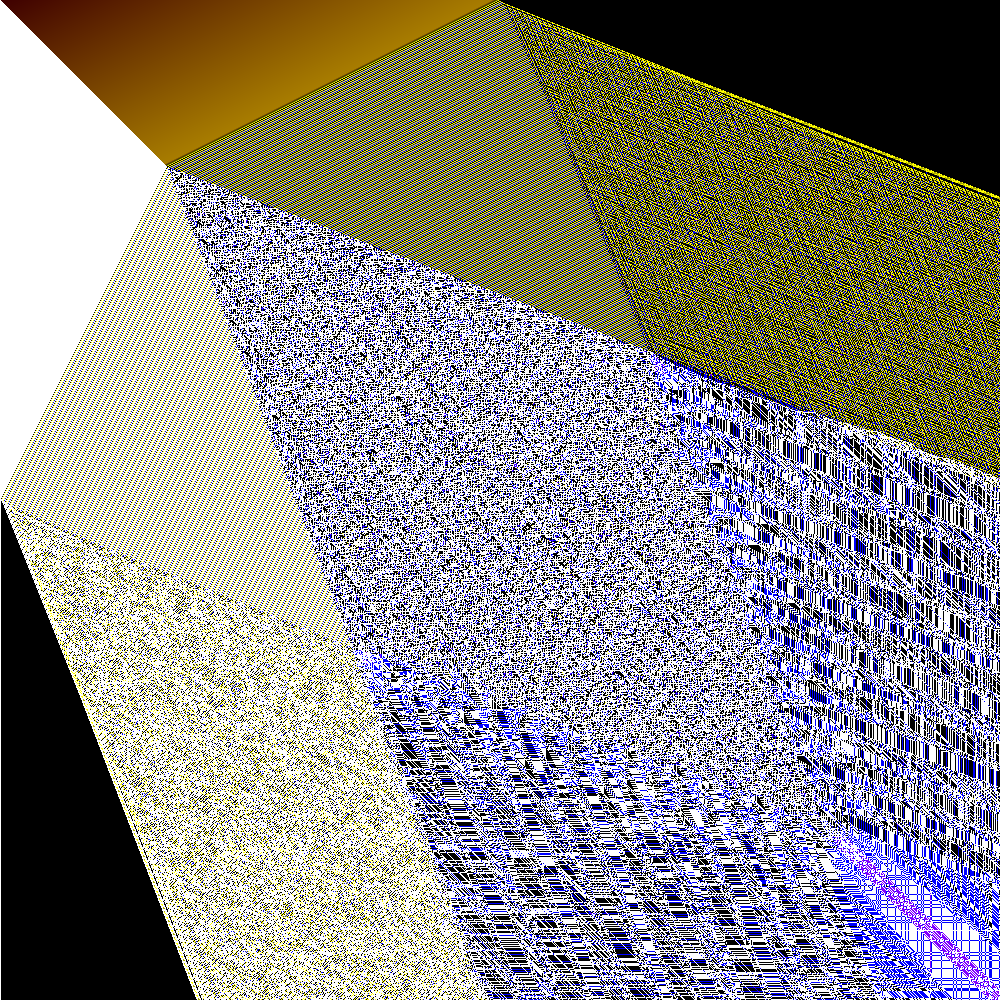}\hspace{-10pt}
\caption{\label{fig:rhombwhite}\label{fig:armwhite}
{\bf (left)}
A comparison between $k=497$ and $k=500$.
Positions where the two images differ are masked in white.
All other colors show places where the two images match,
meaning that if the palace number at position $(x,y)$ is $p$ in the image for $k=497$,
then the palace number at position $(x+1,y+1)$ is $p+3$ in the image for $k=500$.
P-positions are shown in yellow (and brown, in the hood),
and N-positions are shown in black and blue.
Note that the hood, \'epaulettes, and fabric match perfectly.
{\bf (right)}
A comparison between $k=499$ and $k=500$.
In this comparison, ``matching'' means that
if the palace number at position $(x,y)$ is $p$ in the image for $k=499$,
then the palace number at position $(x+1,y)$ is $p+1$ in the image for $k=500$.
Note that the \'epaulette and arm match perfectly, as does half of the hood.
}}
\end{figure}

The hood and the \'epaulets have a very regular structure.
The palace numbers in the hood increase steadily in each row and column,
increasing by one when the higher coordinate is incremented,
and by two when the lower coordinate is incremented.
Thus the hood contains all palace numbers from 0, at the upper left corner,
to $k$, where the hood meets the \'epaulettes at the casing,
a line with slope -1/2 (and -2) that connects the nose at $(k/3, k/3)$
to the shoulder at $(0, k)$ (and $(k, 0)$).
The hood is exactly the region where every move is a winning move,
because all possible further moves can be blocked,
thereby immediately winning the game.
When players are not making mistakes,
the game ends when (and only when) somebody moves into the hood,
thereby winning the game.

The palace numbers in the \'epaulettes form a two-dimensional periodic pattern,
with periods $(5,1)$ and $(4,3)$
(and all integral linear combinations of those base periods).
Only the palace numbers $k-1$, $k$, and $k+1$ appear in the \'epaulettes.
In the \'epaulette's periodic region of size 11,
$k-1$ (a P-position) appears 5 times,
and $k$ and $k+1$ (both N-positions ({\em no palace})) each appear 3 times.

The arms,
shown in the bottom row of Figure~\ref{fig:selforg}
have a random appearance,
although they often contain temporary black stripes
at one of the two angles parallel to their sides.
Despite this initial appearance of disorder,
the arms have many interesting properties,
discussed in Section~\ref{sec:arms}.

The fabric exhibits further self-organization.
The larger black regions visible in the middle row of Figure~\ref{fig:selforg}
form a rough grid,
and in much larger pictures ($k\gg1000$)
the grid morphs into a larger-scale grid,
which is at a slightly different angle
and has cells about 3.5 times larger.
Regions of the small-grid pattern appear to
travel through the large grid like meta-gliders,
visible in the top right of Figure~\ref{fig:gliders}.
These grid cells are separated by threads of P-positions,
which are able to split and merge
to form smaller and larger threads,
and sometimes seem to disappear.

Threads typically have a measurable (vertical, horizontal, and/or diagonal) thickness,
which is added when they merge,
as happens frequently just after the fray,
as in Figure~\ref{fig:fray}.
For example,
in the left-hand warp 
the threads have integer vertical thicknesses,
that is,
each thread has a fixed number of P-positions that occur in every column.
Furthermore, this fixed number is always a Fibonacci number.

\begin{figure}[ht!]
\centering{
\includegraphics[
width=0.99\textwidth]{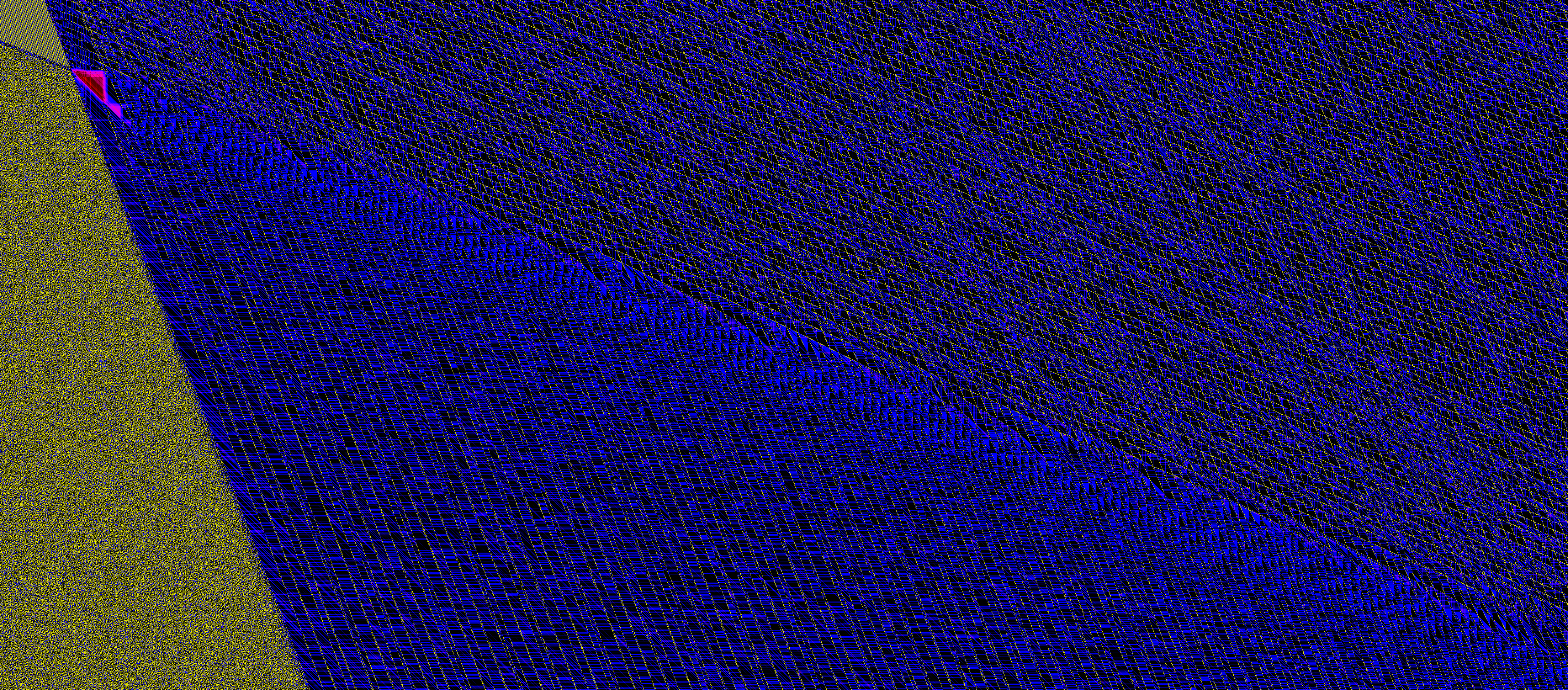}
\caption{\label{fig:armpit}\label{fig:gliders}\label{fig:fray}Game 9999.
The armpit in the upper left corner is generated when the rift hits the seam.
This starts the fray,
which separates the meta-glider behavior in the fabric
from the merging threads of the warp.
If viewing this document electronically, zoom in for detail.
}
}
\end{figure}

\begin{figure}[ht!]
\centering{
\includegraphics[
width=0.8\textwidth]{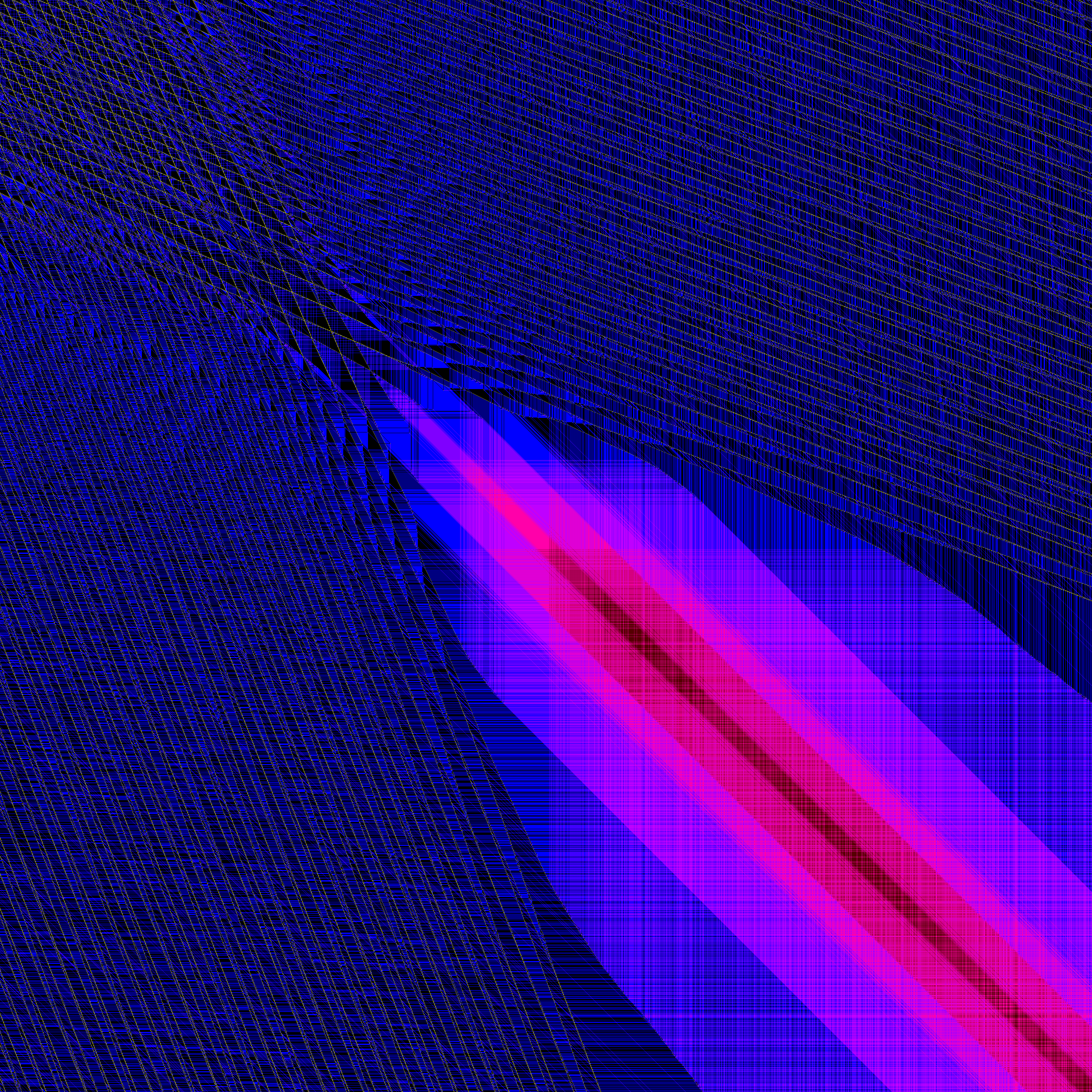}
\caption{Game 40003: A prism and its light beam.
The slightly higher palace numbers near the main diagonal
can give the impression of a beam of light being emitted by the prism.
When the two fraying edges of the fabric meet at the prism,
such higher palace numbers are often produced.
If viewing this document electronically, zoom in for detail.
}\label{fig:laser}
}
\end{figure}

\subsection{Region Prefix Properties}

\begin{figure}[ht!]
\centering{
\includegraphics[width=0.32\textwidth]{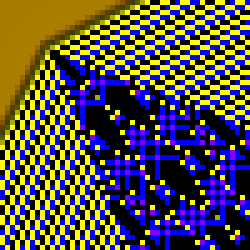}
\includegraphics[width=0.32\textwidth]{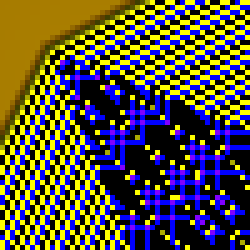}
\includegraphics[width=0.32\textwidth]{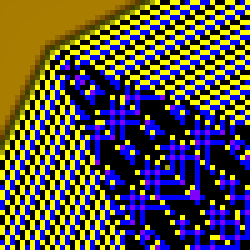}

\includegraphics[width=0.32\textwidth]{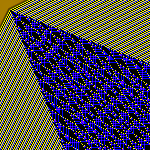}
\includegraphics[width=0.32\textwidth]{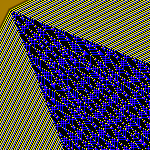}
\includegraphics[width=0.32\textwidth]{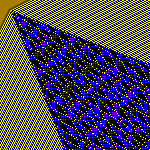}
\caption{
The 3 fabrics.
{\bf (left)}
$k=0 \bmod 3$.
{\bf (center)}
$k=1 \bmod 3$.
{\bf (right)}
$k=2 \bmod 3$.
The upper row is the same as the lower row, but more zoomed in.
}\label{fig:rhomb}
}
\end{figure}

If we look around the nose,
we see one of three pictures,
depending on the value of $k \bmod 3$,
because this determines the precise pixel arrangement of the casing at the nose.
These three shapes are shown in Figure~\ref{fig:rhomb}.
Since there are only three possibilities for the hood boundary shape at the nose,
and the CA rule can then be used
to produce the \'epaulettes and fabric without knowing $k$,
we see that despite the chaotic nature of the fabric,
there are in fact only three fabric patterns that can be produced.
Figures~\ref{fig:selforg} and~\ref{fig:rhombwhite} both show examples of
how the full fabric area matches
between different games with $k$ congruent modulo 3.

Using the CA, starting from an infinite casing pattern,
we can make any of the three fabric patterns
in an infinitely large version.
The fabric patterns that we see in practice are simply
prefixes of one of these three infinite fabrics.

The arms similarly can be formed by the CA rule from the shoulder,
and in this case there is only one possible pattern.
Figure~\ref{fig:armwhite} shows how
this pattern remains very stable as $k$ increases.

\subsection{Properties of the Skin}\label{sec:arms}

Where the arm borders the outer space,
the arm grows a periodic skin (see Figure~\ref{fig:skinpattern}),
which is slightly separated from the rest of the arm,
except that it emits a vertical (in the case of the upper arm) line once per period.
This skin consists of solidly packed P-positions,
with a vertical thickness of $f_{2n}$ (or $f_{2n}+1$ where a line is emitted),
a diagonal thickness of $f_{2n+1}$,
a horizontal thickness of $f_{2n+2}$,
a horizontal period of $f_{2n+3}$,
and a vertical period of $f_{2n+1}$,
where $f_{2n}$ is the $2n^{\rm th}$ Fibonacci number.
The skin originally forms at the top of the arm with $n=1$,
and after about 200 pixels (horizontally, about 80 vertically)
it changes to the form with $n=2$,
then after about 7700 pixels it changes to the form with $n=3$.
We conjecture that for large $k$,
it will continue to thicken in this manner,
with $n$ increasing by one each time,
approaching a slope of $\phi^2$.
In the other direction, one can even see the $n=0$ stage of this pattern
as part of the \'epaulette at the start of the arm for the first 10 pixels or so,
although it is not clearly visible
due to the lack of black pixels between this skin and the rest of the arm.

\begin{figure}[ht!]
\centering{
\includegraphics[width=.8\textwidth]{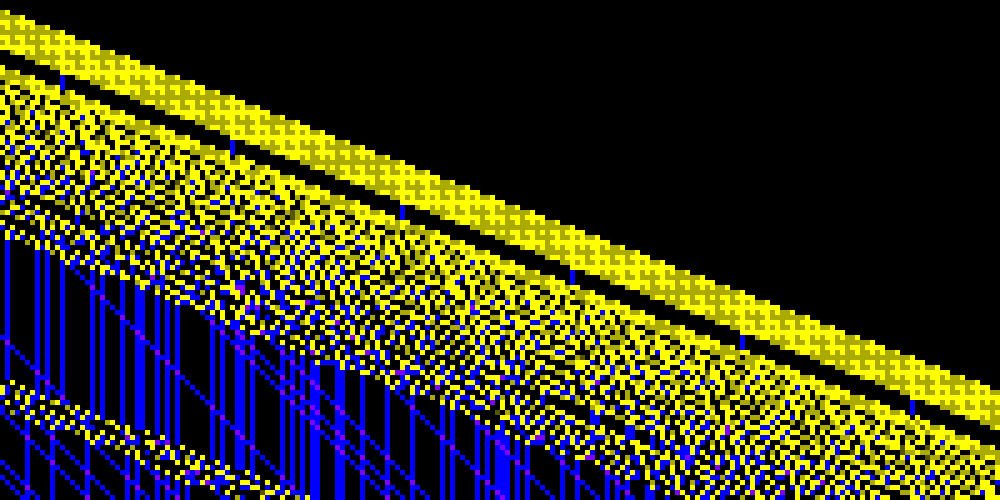}
\caption{Skin pattern.}
\label{fig:skinpattern}
}
\end{figure}

The boundary between the skin
(which has palace numbers $k-2$ and $k-1$)
and the outer space
(which has constant palace number $k$)
consists of steps of width 2 or 3\@.
Each time the skin thickens,
the pattern of steps expands according to the rule
$\{2\rightarrow23, 3\rightarrow233\}$,
starting from the pattern of just $2$ for the skin $n=0$.
The skin pattern of positions of palace numbers $k-2$ and $k-1$
can be computed from this pattern of steps and from the assumption that
there are $k-f_{2n+3}$ palaces to the left of the skin in each row.
This pattern is rotationally symmetric within each period
(between the columns that produce the vertical lines),
and with each thickening follows the expansion rule
(writing $-2$ for $k-2$, etc.)
$$ -2 \rightarrow
\left(\begin{array}{ccc}
-2 & ~ & -1 \\
-2 & ~ & -2
\end{array}\right),
\left(\begin{array}{ccccc}
-2 & ~ & -2 & ~ & -1 \\
-2 & ~ & -2 & ~ & -2
\end{array}\right),
\left(\begin{array}{ccc}
-1 & ~ & -1 \\
-2 & ~ & -1 \\
-2 & ~ & -2
\end{array}\right),
\left(\begin{array}{ccccc}
-2 & ~ & -2 & ~ & -1 \\
-2 & ~ & -2 & ~ & -2 \\
-2 & ~ & -1 & ~ & -1
\end{array}\right),
$$
$$ -1 \rightarrow
\left(\begin{array}{ccc}
-1 & ~ & -1 \\
-2 & ~ & -1
\end{array}\right),
\left(\begin{array}{ccccc}
-2 & ~ & -2 & ~ & -1 \\
-2 & ~ & -1 & ~ & -1
\end{array}\right),
\left(\begin{array}{ccc}
-1 & ~ & -1 \\
-2 & ~ & -1 \\
-1 & ~ & -1
\end{array}\right),
\left(\begin{array}{ccccc}
-2 & ~ & -2 & ~ & -1 \\
-1 & ~ & -1 & ~ & -1 \\
-2 & ~ & -1 & ~ & -1
\end{array}\right),
$$
where one of the four expansion matrices is chosen based on the required size.
The widths are determined by the step pattern,
and the heights are partially defined by
the property that all matrices produced by a given row
should have a row in common.


\section{Conjectures and Questions}
Most of the observations in Section~\ref{sec:selforg} may be viewed as open problems. Here we list of a few.
\begin{itemize}
\item
Is it the case that a
sufficiently thick arm will grow
thicker and thicker skin over time?
\item
The arm's skin (as observed for $n=1$ and $n=2$) has
a vertical thickness of $f_{2n}$ (except where a line is emitted),
a diagonal thickness of $f_{2n+1}$,
a horizontal thickness of $f_{2n+2}$,
a horizontal period of $f_{2n+3}$,
and a vertical period of $f_{2n+1}$,
where $n$ is the thickness level
(here $f_i$ is the $i^\textrm{th}$ Fibonacci number).
Can this pattern be explained, and does it continue?
\item
Are the inner sectors beyond the fabric essentially different for different~$k$,
starting with distinct armpits (see Figure~\ref{fig:gliders}),
and eventually producing ever different and irregular innermost P-threads?
\item Are the armpits the unique regions from which the queen views the most palaces (for large $k$)?
\item
Do the innermost P-threads always have slopes corresponding to algebraic numbers?
If there is only one innermost upper P-thread,
is the slope a root of a second degree polynomial with rational coefficients?
\item
How many non-periodic threads can there be for a given $k$?
We conjecture at most two upper ones (as occurs for $k=46$),
which must be the inner ones;
with all others eventually becoming periodic.
\item
Why is it that the threads of the warp merge in such a way
that their thickness is always a Fibonacci number?
\item Is it true that the threads (also inside the arm and in particular in the inner sector) bound the number of palace positions that the queen sees looking just diagonally, and this number is mostly just one number (otherwise at most one out of two numbers)? 
\end{itemize}



\end{document}